\newtheorem{theorem}{Theorem}     
\newtheorem{conjecture}[theorem]{Conjecture}
\def\om{\omm}
\def\omm{\omega}
\def\R{\mathbb{R}}
\def\per{\mbox{per}}
\def\deg{\mbox{deg}}
\newcounter{rot}
\title[Total weight choosability in Hypergraphs]
{Total weight choosability in Hypergraphs}
\author[F. Pfender]{Florian Pfender}
\curraddr[F. Pfender]{University of Colorado Denver\\ Department of Mathematics and Statistics\\ Denver, CO, USA}
\email{Florian.Pfender@ucdenver.edu}
\thanks{Research supported in part by Simon's Foundation Collaboration Grant}
\keywords {irregular hypergraph labelings}
\subjclass {05C78, (05C15)}
\begin{document}

\begin{abstract}
A total weighting of the vertices and edges of a hypergraph is called vertex-coloring if the total weights of the vertices
yield a proper coloring of the graph, i.e., every edge contains at least two vertices with different weighted degrees. In this note we show that such a weighting is possible
if every vertex has two, and every edge has three weights to choose from, extending a recent result on graphs to hypergraphs.
\end{abstract}

\maketitle

\section{Introduction and Notation}
In 2004, Karo\'{n}ski, {\L}uczak and Thomason~\cite{KLT} made an innocent looking conjecture on edge weightings in graphs, which attracted a lot of activity in the following years.
\begin{conjecture}
For every graph $G$ without isolated edges, there is a weighting $\omm: E(G)\to \{1,2,3\}$,
such that the induced vertex weights $\omm(v):=\sum_{u\in N(v)}\omm(uv)$ properly color $V(G)$.
\end{conjecture}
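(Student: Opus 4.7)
The natural line of attack on this conjecture is via Alon's Combinatorial Nullstellensatz, which underlies essentially every partial result in the area. The plan is to encode the existence of a proper weighting as the non-vanishing of a carefully chosen polynomial, and then to locate a monomial of bounded individual degrees whose coefficient is nonzero.

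Assigning to each edge $e$ a variable $y_e$ in place of $\omm(e)$, I would consider
$$P(\mathbf{y}) = \prod_{uv \in E(G)} \bigl(\omm(u) - \omm(v)\bigr) = \prod_{uv \in E(G)} \Bigl(\sum_{e \ni u,\, e \neq uv} y_e \;-\; \sum_{e \ni v,\, e \neq uv} y_e\Bigr).$$
A proper weighting using $\{1,2,3\}$ exists precisely when $P$ takes a nonzero value on $\{1,2,3\}^{E(G)}$. By the Combinatorial Nullstellensatz, it suffices to exhibit a monomial $\prod_e y_e^{a_e}$ with each $a_e \le 2$, total degree $|E(G)|$, and nonzero coefficient in $P$. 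The standard way to produce such a monomial is through an acyclic orientation of $G$ with small out-degrees (obtained, for example, from a minimum-degree elimination order); the exponents $a_e$ are then dictated by a canonical selection of one summand from each linear factor of $P$ consistent with that orientation.

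The hardest step, and precisely where the conjecture remains unresolved, is verifying that the coefficient is nonzero. Expanded out, this coefficient is an alternating sum over all alternative ways to produce the target monomial by selecting one term from each factor, and these signed sums can cancel in general. I would attempt to rule out cancellation by passing to a minimal counterexample, eliminating small reducible configurations via direct case analysis, and then deploying a discharging argument to force a contradiction from the assumption that every local configuration is ``bad.'' One might also experiment with modified polynomials of the form $\prod_{uv}(\omm(u)-\omm(v))^{k_{uv}}$ for non-trivial exponents, hoping that the combinatorial identity for the leading coefficient becomes easier to control. This broad strategy has produced $1$-$2$-$3$-type results in many restricted settings, but a single choice of polynomial and target monomial that succeeds for every graph has so far resisted all attempts, and I would expect the bulk of the work to concentrate on this obstruction.
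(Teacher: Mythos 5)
This statement is the $1$-$2$-$3$ Conjecture of Karo\'nski, {\L}uczak and Thomason, and it is stated in the paper as a \emph{conjecture}: the paper offers no proof of it, and (as of the paper's writing) none exists. Your proposal is therefore not being measured against a proof in the paper, but it must still stand on its own --- and it does not. You set up the standard Combinatorial Nullstellensatz framework, writing the product polynomial $P(\mathbf{y})=\prod_{uv\in E(G)}(\omm(u)-\omm(v))$ and observing that it suffices to find a monomial of maximum degree with all exponents at most $2$ and nonzero coefficient. That much is correct and is exactly the approach taken in the literature (e.g.\ by Bartnicki, Grytczuk and Niwczyk for Conjecture~\ref{123list}, and in the permanent computations of the paper's main theorem). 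But you then state outright that ``the hardest step, and precisely where the conjecture remains unresolved, is verifying that the coefficient is nonzero,'' and you offer only a wish list of techniques (minimal counterexample, discharging, modified exponents $k_{uv}$) with no argument that any of them closes the gap. A proof proposal that identifies the central difficulty and then declines to resolve it is not a proof; the coefficient can and does vanish for natural choices of target monomial, and no general selection scheme avoiding this cancellation is known.

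Two further points. First, even if the Nullstellensatz step went through, it would prove the stronger list version (Conjecture~\ref{123list}), which is open even for lists of any fixed size $k$ --- so there is reason to suspect that this polynomial method, as formulated, cannot by itself yield the $\{1,2,3\}$ statement, and that any successful argument may need to exploit the specific structure of the lists $\{1,2,3\}$ (as the algebraic/greedy arguments behind Theorem~\ref{125} do, achieving $\{1,\ldots,5\}$ by entirely non-polynomial means). Second, your reduction ``a proper weighting exists precisely when $P$ is nonzero on $\{1,2,3\}^{E(G)}$'' requires $G$ to have no isolated edges so that each factor $\omm(u)-\omm(v)$ is not identically zero; you use this hypothesis implicitly but never invoke it. As it stands, the proposal is a correct description of why the conjecture is hard, not a proof that it is true.
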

This so called 1-2-3-Conjecture is known to be true for several classes of graphs, the best known result for general graphs is given in~\cite{KKP}.
\begin{theorem}\label{125}
For every graph $G$ without isolated edges, there is a weighting $\omm: E(G)\to \{1,2,3,4,5\}$,
such that the induced vertex weights $\omm(v):=\sum_{u\in N(v)}\omm(uv)$ properly color $V(G)$.
\end{theorem}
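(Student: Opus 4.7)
The natural approach is the sequential greedy algorithm of Kalkowski, Karo\'nski, and Pfender. Order the vertices $v_1,\ldots,v_n$ arbitrarily and initialize every edge weight to $3$. Process vertices in order. At step $i$, the edges $v_iv_j$ with $j<i$ have already received their final weights (fixed at step $j$ when $v_j$ was processed), whereas the edges $v_iv_j$ with $j>i$ are still available for modification. The goal at step $i$ is to choose new values (in $\{1,\ldots,5\}$) for these forward edges so that the resulting $\omega(v_i)$ differs from $\omega(v_j)$ for every already-processed neighbor $v_j$ of $v_i$.

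The per-step adjustment relies on the observation that $k$ forward edges at value $3$ give at least $k+1$ achievable values for $\omega(v_i)$ when each is allowed to shift by $\pm 1$ (an arithmetic progression of common difference $2$), and more achievable values when larger shifts are permitted. A short counting argument then shows that one can always select a safe value of $\omega(v_i)$ avoiding the at most $\deg(v_i)$ forbidden integers of earlier neighbors. The key device that keeps the weights in $\{1,\ldots,5\}$ rather than some much larger window is to restrict the target for $\omega(v_i)$ to a set of two consecutive integers $\{t,t+1\}$ and to pick the adjustment so that both the magnitude and the parity are hit in a single pass, which effectively doubles the flexibility at each step.

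The main obstacle is the coupling between steps: edge weights set at step $i$ are final from $v_i$'s perspective, but they also enter the weighted degrees of the not-yet-processed endpoint $v_j$, so pushing any single forward edge to an extreme value ($1$ or $5$) erodes the flexibility available at step $j$. The hard part will be formulating and maintaining an invariant that caps, for each edge, how far it has been shifted from $3$ while still guaranteeing that at every subsequent step both the parity-and-target trick and the counting argument go through. Verifying this invariant at every step (and checking that the allowed weights really do cover $\{1,\ldots,5\}$ and no more) is where the quantitative estimate that $5$ suffices would come from, and where I would expect most of the technical work to lie.
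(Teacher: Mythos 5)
First, note that the paper does not prove Theorem~\ref{125} at all; it is quoted from~\cite{KKP}, so your attempt can only be measured against that reference. Measured that way, your sketch has the right general flavor (a one-pass greedy algorithm starting from all edge weights equal to $3$), but it inverts the algorithm in a way that cannot work, and it explicitly leaves the actual content of the proof --- the invariant --- unformulated. In your scheme each edge is finalized at the step of its \emph{earlier} endpoint: at step $i$ you fix the forward edges $v_iv_j$, $j>i$, and try to steer $\omm(v_i)$ away from the already determined values $\omm(v_j)$, $j<i$. The number of achievable values for $\omm(v_i)$ is then governed by the \emph{forward} degree of $v_i$, while the number of forbidden values is the \emph{backward} degree; for the last vertex in the order the forward degree is $0$ and the backward degree can be as large as $n-1$, so the counting argument you invoke ($k$ forward edges give $k+1$ achievable values against at most $\deg(v_i)$ forbidden ones) collapses. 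No choice of vertex order repairs this, since some vertex must come last.

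The proof in~\cite{KKP} does the opposite: at step $i$ one modifies the \emph{backward} edges $v_iv_j$, $j<i$, and the invariant you say you ``would expect most of the technical work to lie in'' is precisely the following. Each processed vertex $v_j$ carries a reserved set $W_j=\{a_j,a_j+2\}$ of two integers of equal parity such that its current weighted degree lies in $W_j$, such that $W_j\cap W_{j'}=\emptyset$ whenever $v_j$ and $v_{j'}$ are adjacent processed vertices, and such that any later modification of an edge at $v_j$ changes $\omm(v_j)$ by an even amount that keeps it inside $W_j$. With $k$ backward neighbours one obtains roughly $k+1$ candidate positions for $\omm(v_i)$ in arithmetic progression of difference $2$, which suffices to reserve a doubleton $W_i$ disjoint from the $k$ doubletons of the backward neighbours. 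Disjointness of the doubletons of adjacent vertices is what makes the final colouring proper even though weighted degrees continue to move after a vertex is processed, and the controlled magnitude of the modifications is what confines every edge weight to $\{1,\dots,5\}$. Since your write-up neither states nor maintains any such invariant, and the algorithm it does describe fails at the final vertex, the proposal does not constitute a proof.
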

Shortly thereafter, a total version of the 1-2-3-Conjecture, adaptly called the 1-2-Conjecture, was formulated by Przyby{\l}o and Wozniak~\cite{PW}.
\begin{conjecture}\label{12con}
For every graph $G$, there is a weighting $\omm: E(G)\cup V(G)\to \{1,2\}$,
such that the induced total vertex weights $w(v):=\omm(v)+\sum_{u\in N(v)}\omm(uv)$ properly color $V(G)$.
\end{conjecture}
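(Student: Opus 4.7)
Since Conjecture~\ref{12con} is still open in full generality, any proof proposal is necessarily programmatic rather than a complete strategy; the approach I would attempt is the Combinatorial Nullstellensatz / polynomial method, which is the framework behind most of the partial results on this and on the 1-2-3-Conjecture, including the five-weight Theorem~\ref{125}.

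First I would attach a formal variable $x_v$ to each vertex $v$ and $y_e$ to each edge $e$, so that the induced total weight at $v$ becomes the linear form
\[
W(v)=x_v+\sum_{e\ni v} y_e,
\]
and form the polynomial
\[
P=\prod_{uv\in E(G)}\bigl(W(u)-W(v)\bigr).
\]
A valid $\{1,2\}$-weighting is then a $\{0,1\}$-evaluation of the variables (after an appropriate shift) at which $P$ does not vanish, and by the Combinatorial Nullstellensatz it suffices to exhibit, for every graph $G$ without isolated edges, a monomial $\prod_v x_v^{a_v}\prod_e y_e^{b_e}$ with $a_v,b_e\le 1$ whose coefficient in $P$ is nonzero. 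An added bonus is that any such argument automatically yields a list/choosability version, which is where the strongest current results, and the hypergraph result of the present paper, live.

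The combinatorial layer of the plan is to choose a vertex/edge ordering and to identify a local ``defect'' that a single edge or vertex variable can correct. This works cleanly on trees by an inductive peeling argument --- remove a pendant vertex, apply induction, and use the pendant edge together with the vertex weight to enforce the single new inequality --- and it can be pushed via discharging to degenerate graphs, mirroring the strategy behind Theorem~\ref{125}. The algebraic layer is to show that the Nullstellensatz coefficient is nonzero, typically by interpreting it through the Alon--Tarsi permanent method as a signed count of Eulerian suborientations of an auxiliary digraph, and then exhibiting an imbalance in that count.

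The hard part is precisely this last step. With only two values per variable, every exponent in the target monomial is forced to be $1$, so the Alon--Tarsi count reduces to an alternating sum with essentially no cancellation slack, and no structural reason is known for it to be nonzero on arbitrary graphs; this is exactly why Conjecture~\ref{12con} remains open. The realistic interim target, and the one that guides the present paper, is to relax the list sizes just enough (two choices at vertices, three at edges) that the algebraic count becomes controllable, and then to extend the resulting graph argument to the hypergraph setting by replacing each hyperedge by a carefully chosen ``representative'' pair of vertices so that the polynomial $P$ retains its product-over-edges shape.
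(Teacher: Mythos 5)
The statement you were asked about is Conjecture~\ref{12con}, which the paper records as an \emph{open conjecture} and does not prove, so there is no proof of record to compare against. Your proposal is candid that it is a research program rather than a proof, and as such it has a genuine gap at its core: everything rests on exhibiting, for every graph, a monomial of $P=\prod_{uv\in E(G)}\bigl(W(u)-W(v)\bigr)$ with all exponents at most $1$ and nonzero coefficient, and you supply no mechanism for producing one. Note also that the Combinatorial Nullstellensatz requires that monomial to have degree equal to $\deg P=|E(G)|$, not merely exponents bounded by $1$; with two-element lists on every vertex and every edge this forces the nonvanishing of the permanent of a square matrix in which each vertex column and each edge column of $A_G$ appears \emph{at most once}, and no argument is known that such a selection always exists. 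Your own diagnosis is accurate: the Alon--Tarsi/permanent count has no slack here, which is exactly why the conjecture is open. A further small correction: Conjecture~\ref{12con} is stated for \emph{every} graph, with no exclusion of isolated edges --- the vertex weights themselves can separate the endpoints of an isolated edge, which is precisely the extra power the total version has over the edge-only $1$-$2$-$3$ setting.

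Your framing does correctly identify where the tractable relaxation lies, and it matches the machinery the paper actually uses for its Theorem~\ref{main}: there (following Zhu and Wong's Theorem~\ref{tot123list}) each edge column is allowed to appear up to \emph{twice} in the matrix $B$, corresponding to edge lists of size three, and it is exactly this slack that drives the inductive column-replacement argument --- when replacing a copy of $A_H(.,u)$ by the vertex column $A_H(.,v_e)$ kills the permanent, one falls back on a second copy of the edge column $A_H(.,e)$, and multilinearity guarantees one of the two choices works. So your proposal is a reasonable description of the state of the art and of the paper's method, but it is not, and does not claim to be, a proof of the stated conjecture.
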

Kalkowski in~\cite{K} came close to settling this conjecture.
\begin{theorem}\label{tot123}
For every graph $G$, there are weightings $\omm: E(G)\to \{1,2,3\}$ and $\omm': V(G)\to \{1,2\}$
such that the induced total vertex weights $w(v):=\omm'(v)+\sum_{u\in N(v)}\omm(uv)$ properly color $V(G)$.
\end{theorem}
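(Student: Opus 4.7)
The plan is to follow the strategy introduced by Kalkowski in~\cite{K}: process the vertices one by one in a fixed ordering and, at each step, exploit the joint flexibility of the vertex weight together with the backward-edge weights to resolve all conflicts with already-processed neighbors. Concretely, I would fix an arbitrary ordering $v_1,v_2,\ldots,v_n$ of $V(G)$, orient every edge from its earlier to its later endpoint, and process the vertices in this order. When the procedure reaches $v_i$, it selects the vertex weight $\omm'(v_i)\in\{1,2\}$ and, if necessary, modifies the weights $\omm(v_jv_i)\in\{1,2,3\}$ on the backward edges into $v_i$, aiming to ensure $w(v_i)\neq w(v_j)$ for every already-processed neighbor $v_j$.

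The central obstacle is that reassigning the weight on a backward edge $v_jv_i$ also shifts $w(v_j)$, potentially reintroducing a conflict with one of $v_j$'s earlier neighbors. To prevent this, I would maintain the stronger inductive invariant that every processed vertex $v_j$ is equipped with a \emph{window} $W_j$ of three consecutive integers such that $w(v_j)\in W_j$ and every value in $W_j$ avoids the sums $w(v_k)$ of all already-processed neighbors $v_k$ of $v_j$. The three admissible weights on the backward edge $v_jv_i$ then correspond to shifting $w(v_j)$ to three consecutive values inside $W_j$, all of which are safe. Hence each backward edge of $v_i$ effectively provides three \emph{independent} unit shifts of $w(v_i)$, while $\omm'(v_i)$ contributes an additional binary shift; together this yields a fairly rich family of candidate values for $w(v_i)$.

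The hard part, and the heart of Kalkowski's argument, is to verify inductively that a valid window $W_i$ exists for $v_i$ at the end of the step. One has to argue that among the candidate values attainable for $w(v_i)$ (through the combined choices of $\omm'(v_i)$ and the backward-edge weights), three consecutive integers can be selected so as to dodge all forbidden sums $w(v_j)$ from earlier neighbors $v_j$, while simultaneously leaving each affected $v_j$ inside its own window. The delicate counting step---showing that the two-way flexibility from having both edge and vertex weights is enough to overcome the pressure from the at most $|N^-(v_i)|$ forbidden sums, and to ``create'' a new window of width three for $v_i$---is where I would expect the main technical difficulty. Once that counting succeeds, the induction carries through and produces the total weighting claimed in Theorem~\ref{tot123}.
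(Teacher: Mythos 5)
Your proposal is only an outline: you name the place where the proof must actually be fought for (``the delicate counting step \ldots is where I would expect the main technical difficulty'') and then assume it succeeds. That step cannot be waved through, and in fact the specific invariant you set up does not survive it. First, the invariant ``every value in $W_j$ avoids the sums $w(v_k)$ of already-processed neighbors'' is stated against a moving target: $w(v_k)$ keeps drifting as later vertices are processed, so the invariant has to be phrased as disjointness of the windows themselves. Second, once you do that, the arithmetic fails for width-$3$ windows with unit shifts: two width-$3$ windows of consecutive integers are disjoint only if their centers differ by at least $3$, so each processed backward neighbor forbids about five candidate centers for the new window $W_i$, while each backward edge (shifting $w(v_i)$ by $-1,0,+1$) enlarges the set of reachable values of $w(v_i)$ by only two. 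With $d$ backward neighbors you have roughly $2d+2$ candidate positions against roughly $5d$ forbidden ones, so the induction cannot close. Kalkowski's actual argument avoids this by maintaining for each processed vertex a two-element set of the form $\{c,c+2\}$ and, crucially, by coupling every modification of a backward edge $v_jv_i$ with a compensating change of the still-uncommitted vertex weight $\omega'(v_j)\in\{1,2\}$, so that $w(v_j)$ moves by $0$ or $\pm 2$ and stays inside its set; that coupling is the essential idea and it is absent from your write-up.

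Separately, note that this route is not the one the paper takes: the paper quotes Theorem~\ref{tot123} from~\cite{K} and derives it (and its hypergraph analogue) as an immediate special case of the list version, which is proved non-algorithmically via the Combinatorial Nullstellensatz by exhibiting an $m\times m$ matrix of edge/vertex incidence columns with nonzero permanent. If you want to prove Theorem~\ref{tot123} in the spirit of this paper, apply Theorem~\ref{tot123list} (or Theorem~\ref{main}) with the constant lists $\{1,2\}$ on vertices and $\{1,2,3\}$ on edges; if you want to prove it algorithmically, you must replace your window invariant with Kalkowski's doubleton-plus-compensation mechanism and carry out the count.
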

One natural and promising approach for both conjectures is the use of Alon's Combinatorial Nullstellensatz (see~\cite{A}). In its most straightforward application, it would prove list versions of the conjectures if successful, leading to the following stronger conjectures, first stated by Bartnicky, Grytczuk and Niwczyk, and by Przyby{\l}o and Wozniak, and Wong and Zhu, respectively.
\begin{conjecture}\label{123list}\cite{BGN}
For every graph $G$ without isolated edges, and for every assignment of lists of size $3$ to the edges of $G$, there exists a weighting $\omm: E(G)\to \R$ from the lists,
such that the induced vertex weights $\omm(v):=\sum_{u\in N(v)}\omm(uv)$ properly color $V(G)$.
\end{conjecture}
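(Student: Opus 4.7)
The plan is to follow the approach sketched in the preceding paragraph, namely Alon's Combinatorial Nullstellensatz. Introduce a variable $x_e$ for each edge $e \in E(G)$ and consider the graph polynomial
\[
P\bigl(\{x_e\}_{e \in E(G)}\bigr) \;=\; \prod_{uv \in E(G)} \bigl(S(u) - S(v)\bigr), \qquad S(v):=\sum_{e \ni v} x_e.
\]
A nonzero substitution $x_e \mapsto \omm(e)$ produces a weighting under which $\omm(u) \ne \omm(v)$ for every edge $uv$. By the Nullstellensatz, to guarantee such a substitution from arbitrary lists $L(e)$ with $|L(e)|=3$, it suffices to exhibit a monomial $\prod_e x_e^{a_e}$ of total degree $|E(G)|$ with $a_e \le 2$ for every edge $e$, whose coefficient in $P$ is nonzero.

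The natural search is for a monomial in which each edge variable receives exponent $0$, $1$, or $2$ dictated by some consistent combinatorial rule; a common strategy is to orient $G$ and assign $x_e$ an exponent equal to its multiplicity of use as one endpoint's contribution. For bipartite graphs this can be made to work rather directly, and for graphs of bounded maximum average degree (or those with special structural decompositions) the required coefficient can be computed or bounded combinatorially, which is how partial list results for Conjecture~\ref{123list} have been obtained. In parallel, one would keep Kalkowski--Karo\'nski--Pfender's proof of Theorem~\ref{125} in mind, attempting to list-ify the greedy fixing argument by passing from fixed weights to differences of list elements.

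The main obstacle, and the reason the conjecture remains open, is precisely the last step: proving that for \emph{every} graph without isolated edges some admissible monomial has nonzero coefficient. Expansions of $P$ exhibit massive cancellation, and no good combinatorial interpretation of the coefficients (as a signed count of some substructure whose imbalance can be controlled) is known. I would expect to spend most of the effort trying to find such an interpretation, perhaps via a carefully chosen orientation of $G$ together with an inclusion--exclusion over closed walks, and failing that, to retreat to the weaker statement actually needed in this paper: allowing also a list at each vertex (raising its permissible exponent), which loosens the Nullstellensatz requirement enough that a greedy or inductive monomial construction becomes feasible.
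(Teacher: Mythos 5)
The statement you were asked about is not a theorem of this paper but an open conjecture: it is the list version of the 1-2-3 Conjecture due to Bartnicki, Grytczuk and Niwczyk~\cite{BGN}, and the paper explicitly remarks that ``Conjecture~\ref{123list} is open even if we allow larger lists of some fixed size $k$.'' There is therefore no proof in the paper to compare your attempt against, and your writeup --- correctly --- does not constitute a proof either. You identify the standard Combinatorial Nullstellensatz reduction (find a monomial $\prod_e x_e^{a_e}$ of top degree $|E(G)|$ with every $a_e\le 2$ and nonzero coefficient in the graph polynomial $P=\prod_{uv\in E(G)}(S(u)-S(v))$), and you correctly locate the obstruction at exactly the step where it actually sits: nobody knows how to show such a monomial exists for an arbitrary graph without isolated edges, because the coefficients exhibit heavy cancellation and lack a usable combinatorial interpretation. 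So the ``gap'' in your proposal is not an error on your part; it is the open problem itself.

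Your closing remark is also well calibrated to this paper: the retreat you describe --- adding a list at each vertex so that the relevant square matrix/permanent has more columns to choose from --- is precisely the move that makes the total-weight version (Conjecture~\ref{12list} and its partial resolutions, Theorems~\ref{tot123list} and~\ref{main}) tractable. In the paper's proof of Theorem~\ref{main}, the vertex columns of the matrix $A_H$ are what allow the inductive column-replacement argument to keep the permanent nonzero; without them (i.e., in the pure edge-weighting setting of Conjecture~\ref{123list}) that induction has no base to stand on. If you want to produce actual results rather than a status report, you should either prove the conjecture for a restricted class (bipartite, bounded degeneracy, etc.), or work on the total version, where the permanent method genuinely closes.
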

\begin{conjecture}\label{12list}\cite{PW2},\cite{WZ}
For every graph $G$, and for every assignment of lists of size $2$ to the vertices and edges of $G$, there exists a weighting $\omm: V(G)\cup E(G)\to \R$ from the lists,
such that the induced total vertex weights $w(v):=\omm(v)+\sum_{u\in N(v)}\omm(uv)$ properly color $V(G)$.
\end{conjecture}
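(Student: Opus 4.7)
The plan is to attack Conjecture \ref{12list} via Alon's Combinatorial Nullstellensatz, exactly as the preceding discussion suggests. Introduce a formal variable $x_z$ for each $z\in V(G)\cup E(G)$, and for each vertex $v$ form the linear form $L_v = x_v + \sum_{u\in N(v)} x_{uv}$ representing the induced total weight at $v$. Consider the graph polynomial
\[P(\mathbf{x}) = \prod_{uv\in E(G)} (L_u - L_v),\]
of total degree $|E(G)|$. Since every list has size $2$, it suffices to exhibit a multilinear monomial of degree $|E(G)|$ whose coefficient in $P$ is nonzero; any such monomial yields the desired weighting via the Nullstellensatz.

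First I would try monomials of a very symmetric form. Each factor $(L_u - L_v)$ is a sum of $\deg(u)+\deg(v)+2$ distinct variables, so a multilinear monomial of the right degree corresponds to a choice of one variable from each factor, subject to the constraint that no variable is chosen twice. A vertex variable $x_v$ appears in $\deg(v)$ factors and an edge variable $x_{uv}$ in exactly two factors, so the signed count of such choices is a highly structured combinatorial sum over certain orientations or matchings of an auxiliary structure built from $G$. Showing that this sum is nonzero for every $G$ is the main obstacle, and it is precisely the barrier that has kept Conjecture \ref{12list} open. If the natural candidate monomial cancels, I would next try the Alon--Tarsi orientation method, reducing nonvanishing to a comparison of even and odd Eulerian sub-digraphs in a carefully chosen orientation, and possibly localize by first contracting pendant or bridge structures in $G$.

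Should the polynomial method resist a direct attack, the fallback is to adapt Kalkowski's greedy argument behind Theorem \ref{tot123} to the list setting. Process the vertices in some order, maintaining at each already-processed vertex a small set of ``still-possible'' weighted degrees, and at each step pick values from the size-$2$ lists for the current vertex and its incident edges so as to distinguish the current vertex from its processed neighbors while shifting their possibility sets only in a controlled way. With list sizes only $2$ there is very little slack, so this route is likely to yield only a weakening in which one of the two lists is enlarged, either on vertices or on edges. That weakening is essentially the kind of statement the present paper proves, extended to the hypergraph setting, and for the hypergraph generalisation I expect the greedy route to be considerably more tractable than the polynomial one, because edges now cover more than two vertices and so the greedy choice has more room to separate the required pairs.
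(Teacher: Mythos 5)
The statement you are trying to prove is stated in the paper as an open \emph{conjecture} (due to Przyby{\l}o--Wo\'{z}niak and Wong--Zhu); the paper offers no proof of it and only establishes the weaker result in which edge lists have size $3$ (Theorem~\ref{tot123list} and its hypergraph extension, Theorem~\ref{main}). Your proposal does not close this gap: it correctly sets up the Combinatorial Nullstellensatz framework and correctly identifies that one must exhibit a \emph{multilinear} monomial of degree $|E(G)|$ with nonzero coefficient in $P(\mathbf{x})=\prod_{uv\in E(G)}(L_u-L_v)$, but you then explicitly concede that proving this coefficient nonzero ``is precisely the barrier that has kept Conjecture~\ref{12list} open.'' That concession is the whole problem. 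The permanent machinery used in the paper (and in Zhu--Wong) produces a nonvanishing coefficient only for monomials in which each edge variable may appear with exponent up to $2$ --- equivalently, a square matrix in which each edge column of $A_H$ may be used twice --- and the inductive column-replacement argument genuinely needs that slack: when eliminating the columns $A_H(.,v_e)$ one trades them for edge columns $A_H(.,e)$ that may already be present once. Forcing every column to appear at most once (which is what size-$2$ edge lists require) is exactly where the known argument breaks, and neither your Alon--Tarsi reformulation nor the suggested localization supplies a replacement.

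Your fallback route fares no better for the stated conjecture: you acknowledge that adapting Kalkowski's greedy argument to lists of size $2$ on both vertices and edges leaves ``very little slack'' and would likely only yield a weakening with one list enlarged. That weakening is precisely Theorem~\ref{tot123list}/Theorem~\ref{main}, i.e., a different (already known) statement, not Conjecture~\ref{12list}. In short, the proposal is a reasonable survey of the standard attack routes, but it contains no new idea at the one step that matters, and so it is not a proof.
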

Conjecture~\ref{123list} is open even if we allow larger lists of some fixed size $k$. For Conjecture~\ref{12list}, the best result due to Zhu and Wong in~\cite{ZW} generalizes Theorem~\ref{tot123}.
\begin{theorem}\label{tot123list}
For every graph $G$, and for every assignment of lists of size $2$ to the vertices and of size $3$ to the edges of $G$, there exists a weighting $\omm: V(G)\cup E(G)\to \R$ from the lists,
such that the induced total vertex weights $w(v):=\omm(v)+\sum_{u\in N(v)}\omm(uv)$ properly color $V(G)$.
\end{theorem}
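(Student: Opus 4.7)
The plan is to adapt Kalkowski's iterative construction from Theorem~\ref{tot123} to the list setting, in the spirit of Wong and Zhu. A direct application of the Combinatorial Nullstellensatz seems unlikely to succeed here, since the edge-list version of the 1-2-3 conjecture is open even with arbitrarily large (but fixed) list sizes, suggesting that the relevant coefficient of the graph polynomial has no useful structure to exploit; one should instead proceed by an online greedy construction.

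Fix an ordering $v_1, \ldots, v_n$ of the vertices and process them in this order. At step $i$, I will commit to the vertex weight $\omm(v_i) \in L(v_i)$ and, for every edge $v_i v_j$ with $j > i$, to the edge weight $\omm(v_i v_j) \in L(v_i v_j)$. The invariant I maintain after step $i$ is that for each already-processed $v_k$ (with $k \leq i$), the eventual total weight $w(v_k)$ is known to lie in a set $S_k \subseteq \R$ with $|S_k| \leq 2$, and $S_k \cap S_j = \emptyset$ whenever $v_k v_j \in E$ and $k, j \leq i$. The residual ambiguity in $w(v_k)$ comes precisely from the edges $v_k v_\ell$ with $\ell > i$, whose weights are not yet chosen.

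The crux is showing that step $i$ can always be carried out while preserving the invariant. The $|L(v_i)|=2$ options for $\omm(v_i)$ shift the eventual $w(v_i)$ by two constants, while each choice from a three-element list $L(v_i v_j)$ (with $j>i$) simultaneously shifts $w(v_i)$ and updates the still-to-be-finalized candidate set $S_j$. The key combinatorial claim is that the weights from each $L(v_i v_j)$ can be selected so that $S_j$ remains of size at most $2$ with controlled separation, while the joint choice of $\omm(v_i)$ and the forward edge weights forces the resulting $S_i$ to avoid every $S_k$ over already-processed neighbors $v_k$ of $v_i$. I expect this counting --- balancing at most two forbidden values per already-processed neighbor against the configurations that preserve the pair structure on the forward side --- to be the main technical obstacle, and this is exactly where the list sizes $2$ on vertices and $3$ on edges enter critically: the three-element edge list is precisely what allows two-element candidate sets to be propagated forward without growing in size, while the two-element vertex list provides the final degree of freedom needed to separate $S_i$ from the $S_k$'s of back-neighbors.
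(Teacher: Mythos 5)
Your proposal is not a proof; the step you yourself identify as ``the main technical obstacle'' --- the key combinatorial claim that the candidate sets $S_j$ can be kept of size at most two and made pairwise disjoint across edges --- is exactly where the argument breaks, and it is not a routine counting exercise. Kalkowski's construction for Theorem~\ref{tot123} depends essentially on the arithmetic of the fixed lists $\{1,2,3\}$ and $\{1,2\}$: every candidate pair has the form $\{s,s+1\}$, and toggling an edge weight between consecutive list elements shifts a vertex sum by exactly $1$, so a two-element candidate set can always be realized by a single pending edge and the pairs can be separated by a parity-type argument. With arbitrary real lists $\{a,b,c\}$ the available shifts $b-a$, $c-b$, $c-a$ differ from edge to edge and need not match the gap of any $S_j$, so the set of achievable values of $w(v_j)$ coming from several pending edges is a sumset that grows beyond size $2$; nothing in your sketch controls this. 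There is also an internal inconsistency: you say you \emph{commit} to all forward edge weights at step $i$, in which case $w(v_k)$ is fully determined once $v_k$ is processed and there is no residual ambiguity left to exploit against later neighbors --- the scheme then degenerates to a pure greedy choice that plainly fails (a vertex may have many earlier neighbors but only two vertex-weight options once its back-edges are frozen).

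Your opening heuristic is also backwards relative to how this theorem is actually proved. The known proof (Zhu--Wong, reproduced in this paper in its hypergraph form as Theorem~\ref{main}) \emph{is} a direct application of the Combinatorial Nullstellensatz: one takes $\phi(\omega)=\prod_{uv\in E}(w(u)-w(v))$ and shows that some maximum-degree monomial of degree at most $1$ in each vertex variable and at most $2$ in each edge variable has nonzero coefficient, by exhibiting a square matrix $B$ of incidence-type columns (each vertex column at most once, each edge column at most twice) with $\per(B)\neq 0$. This is done by induction on the number of vertices, using the identity $A(.,e)=A(.,u)-A(.,v_e)$ and the multilinearity of the permanent to trade columns without killing the permanent. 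The reason the edge-only list version of the 1-2-3 conjecture remains open is not that the Nullstellensatz ``has no useful structure to exploit'' in general, but that without vertex columns one lacks the slack used in this column-exchange argument; the degree-$2$ vertex lists are precisely what make the algebraic approach go through here. If you want to salvage your write-up, either supply a genuinely new mechanism for propagating bounded candidate sets under arbitrary real lists, or switch to the permanent argument.
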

All these questions also make sense for hypergraphs. In this note, a vertex coloring of a hypergraph is proper if there is no monochromatic edge. A hypergraph is {\em linear} if any two edges intersect in at most one vertex. Two vertices in a hypergraph are {\em twins} if they lie in the excat same set of edges. With these notions, Theorem~\ref{125} was generalized in~\cite{KKP2}.
\begin{theorem}
For every linear hypergraph $H$ with edges of order at most $r\ge 2$, and no edge consisting of twins, there is a weighting $\omm: E(H)\to \{1,2,\ldots,\max\{5,r+1\}\}$,
such that the induced vertex weights $\omm(v):=\sum_{e\ni v}\omm(e)$ properly color $V(H)$.
\end{theorem}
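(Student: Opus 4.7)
The plan is to extend the algorithmic proof of Theorem~\ref{125} (due to Kalkowski, Karo\'nski, and Pfender) from graphs to linear hypergraphs. I would fix an arbitrary linear ordering $v_1 < v_2 < \ldots < v_n$ of $V(H)$, and for each edge $e$ designate its latest vertex $\ell(e)$ in this ordering; write $L(v) = \{e : \ell(e) = v\}$. The algorithm processes vertices in order $v_1, v_2, \ldots, v_n$, and when processing $v_i$ it finalizes the weights $\omm(e) \in \{1, \ldots, \max\{5, r+1\}\}$ for all $e \in L(v_i)$. At this step every vertex $v_j$ with $j < i$ and $v_j \notin \bigcup L(v_i)$ already has its final weighted degree, while vertices in $\bigcup L(v_i)$ will still be shifted by the current choices.

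The key local observation is that for any single edge $e \in L(v_i)$, shifting $\omm(e)$ by $\Delta$ shifts $\omm(u)$ by $\Delta$ for every $u \in e$ simultaneously, so differences between weighted degrees of vertices in $e \setminus \{v_i\}$ are invariant under our choice of $\omm(e)$. Hence $e$ is ``automatically safe'' if two of its earlier vertices already disagree, and ``dangerous'' otherwise, in which case all of $e \setminus \{v_i\}$ share a common degree $T_e$ determined by previously made choices (up to the shift from $\omm(e)$). Crucially, because $H$ is linear, any two edges in $L(v_i)$ meet only at $v_i$, so the computation of $T_e$ for one $e \in L(v_i)$ is unaffected by our choice of $\omm(e')$ for another $e' \in L(v_i)$. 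For a dangerous edge $e$ the requirement $\omm(v_i) \ne T_e$ then reduces to a single forbidden value for the sum $S + \sum_{e' \in L(v_i)\setminus\{e\}} \omm(e')$, where $S$ is the fixed contribution from earlier edges at $v_i$.

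The main obstacle will be to choose the weights $(\omm(e))_{e \in L(v_i)}$ from the palette of size only $M = \max\{5, r+1\}$ so that all dangerous edges in $L(v_i)$ are satisfied simultaneously. Each dangerous edge removes one value from the sum of all but one of the $\omm(e')$, and a naive counting argument would require $|L(v_i)|$ bounded in terms of $M$. I would therefore look for a cleverer sequential subroutine (in the spirit of KKP): process the edges in $L(v_i)$ one at a time, at each step fixing the weight of the current dangerous edge so as to steer $\omm(v_i)$ away from the single forbidden value, while keeping a reserve of allowable shifts for later edges. The non-twin condition enters at the base step---an all-twin edge would force $T_e$ to equal every vertex weight, so its exclusion is essential---and the constraint $M \ge r+1$ provides just the slack needed for each dangerous edge of order up to $r$ to admit at least two compatible choices with the previously imposed forbidden values, while $M \ge 5$ covers the small-$r$ case by inheriting from Theorem~\ref{125}.
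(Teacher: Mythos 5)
This theorem is not proved in the present note at all: it is imported from the preprint \cite{KKP2}, so there is no in-paper proof to compare against. Judged on its own terms, your proposal is a plan rather than a proof, and it has two concrete gaps. First, the claim that when $v_i$ is processed ``every vertex $v_j$ with $j<i$ and $v_j\notin\bigcup L(v_i)$ already has its final weighted degree'' is false: such a $v_j$ may lie in an edge $e$ with $\ell(e)=v_k$ for some $k>i$, whose weight has not yet been finalized. This is precisely the difficulty the Kalkowski--Karo\'nski--Pfender argument is built to overcome --- each processed vertex is assigned a two-element set of admissible final degrees, initial weights are set to a middle value, and backward edges may later be perturbed only within a reserved margin that keeps every earlier vertex inside its two-element set. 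None of that machinery appears in your sketch, and without it the notion of a ``dangerous'' edge (all of $e\setminus\{v_i\}$ sharing a common degree $T_e$ ``determined by previously made choices'') is not well defined, since those degrees are not yet determined.

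Second, you explicitly defer the heart of the matter: with $|L(v_i)|$ dangerous edges each forbidding one value of a partial sum, and a palette of size $M=\max\{5,r+1\}$ independent of $|L(v_i)|$, the naive count does not close, and ``look for a cleverer sequential subroutine'' is exactly the step that constitutes the proof. Relatedly, the appearance of $r+1$ (needed because an edge of order $r$ can have up to $r-1$ earlier vertices whose pairwise coincidences must be broken, which is a different phenomenon from the single forbidden value you describe) and of $5$ (which does not simply ``inherit'' from Theorem~\ref{125}, since the hypergraph contains no actual graph instance to which that theorem applies) are asserted rather than derived. Until the adjustment/reservation scheme is specified and the simultaneous satisfaction of all dangerous edges at $v_i$ is carried out within the stated palette, the argument is not complete.
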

This theorem is sharp for all $r\ge 4$. Further, for general hypergraphs the following bound is shown.
\begin{theorem}\label{Thm:gen}
For every hypergraph $H$ with all edges of order between $2$ and $r$, and no edge consisting of twins, there is a weighting $\omm: E(H)\to \{1,2,\ldots,5r-5\}$,
such that the induced vertex weights $\omm(v):=\sum_{e\ni v}\omm(e)$ properly color $V(H)$.
\end{theorem}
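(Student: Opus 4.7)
The plan is to adapt the iterative weighting procedure of Kalkowski, Karoński and Pfender used in the proof of Theorem~\ref{125} to the hypergraph setting. The key simplification one may exploit is that the properness condition on a hyperedge $e$ is weaker than on a graph edge: only two vertices of $e$ need to receive distinct weighted degrees, not all of them pairwise distinct. The no-twin-edge hypothesis is precisely what guarantees that, inside every $e$, one can nominate a pair of vertices that are not forced by the hypergraph structure to receive equal weighted degree under any weighting.

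Concretely, I would fix a linear ordering $v_1,\ldots,v_n$ of $V(H)$ and, for each edge $e$, select two non-twin vertices $u_e,v_e\in e$ with $v_e$ the later of the two in the ordering. The algorithm then processes vertices in order; when handling $v_i$, each edge $e$ with $v_e=v_i$ contributes a single constraint, namely one forbidden value for $\omega(v_i)$ coming from the already-committed weighted degree of its partner $u_e$. Each vertex carries both a tentative weighted degree $w(v)$ and a bounded surplus encoding how much $\omega(v)$ may still shift in later rounds. At step $i$, one adjusts the weights of hyperedges incident to $v_i$ whose other vertices are not yet fully committed, choosing $\omega(v_i)$ inside an interval of achievable values so as to avoid the forbidden list while preserving the surplus invariant for the remaining steps.

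The heart of the argument, and where the factor $r$ enters the bound $5r-5$, is the propagation of adjustments. A single hyperedge of size $k$ transmits any weight change simultaneously to all of its $k$ vertices, so up to $r-1$ already-committed weighted degrees may shift when one readjusts at $v_i$. The main obstacle will therefore be to formulate an invariant on the per-vertex surplus strong enough to guarantee that, even after this $r$-fold propagation, the achievable interval for $\omega(v_i)$ has length at least $5r-5$, comfortably dominating the number of forbidden values that can arise at any single step. Calibrating this surplus invariant—linear dependence on $r$ to absorb the hyperedge propagation, with constant $5$ inherited from the graph case $r=2$—against the $5r-5$ available edge-weight slots is the crux of the technical work, and the step at which the no-twin hypothesis is used to keep every edge constraint well-posed.
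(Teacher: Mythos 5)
A preliminary remark: Theorem~\ref{Thm:gen} is stated in this note without proof --- it is quoted from \cite{KKP2} --- so there is no in-paper argument to measure your proposal against; I can only assess the proposal on its own terms.

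As written, it is a plan rather than a proof, and you concede as much: the surplus invariant whose calibration you call the crux of the technical work is essentially the entire content of the theorem, and it is not supplied. Beyond that, two concrete points need repair. First, the weight $\omm(e)$ of the edge $e$ itself contributes equally to $w(u_e)$ and $w(v_e)$, since both vertices lie in $e$; adjusting $\omm(e)$ therefore never changes $w(u_e)-w(v_e)$. The separation of $u_e$ from $v_e$ must come from edges containing exactly one of the two --- such edges exist precisely because $u_e$ and $v_e$ are not twins, which is the real role of that hypothesis --- but a distinguishing edge need not contain the later vertex $v_e$ at all, so it need not be among the ``hyperedges incident to $v_i$'' that your step-$i$ adjustment is allowed to touch. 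Your scheme must either choose the pair $(u_e,v_e)$ together with a designated distinguishing edge and process the constraint at a vertex that this edge does contain, or permit adjustments of edges not incident to the current vertex; either way the bookkeeping changes substantially. Second, the quantitative accounting is off: the number of forbidden values arising at step $i$ is governed by the number of edges $e$ with $v_e=v_i$, which is not bounded by any function of $r$, so an achievable interval of length ``at least $5r-5$'' cannot ``comfortably dominate'' it. What is needed, as in the proof of Theorem~\ref{125}, is a per-edge ledger: each adjustable backward edge must buy strictly more achievable values for $w(v_i)$ than the number of forbidden values it creates, uniformly over all steps, and it is precisely here that the $r$-fold propagation you correctly identify (one hyperedge adjustment moves up to $r-1$ already-committed degrees simultaneously) threatens to break the ledger. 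Until that invariant is written down and verified against the palette $\{1,\ldots,5r-5\}$, the proposal does not establish the theorem.
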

Using similar but simpler ideas than in the proofs of the previous two theorems together with the proof of Theorem~\ref{tot123}, one can easily show the hypergraph version of Theorem~\ref{tot123}.
\begin{theorem}
For every hypergraph $H$ without edges of size $0$ or $1$, there are weightings $\omm: E(G)\to \{1,2,3\}$ and $\omm': V(G)\to \{1,2\}$
such that the induced total vertex weights $w(v):=\omm'(v)+\sum_{e\ni v}\omm(e)$ properly color $V(H)$.
\end{theorem}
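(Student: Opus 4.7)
The approach is to imitate the inductive vertex-by-vertex argument from Kalkowski's proof of Theorem~\ref{tot123}, with the hypergraph-specific adjustments in the spirit of the proofs of the two preceding hypergraph theorems in the excerpt. Order the vertices arbitrarily as $v_1,\ldots,v_n$, and for every edge $e$ put $\ell(e)=\max\{i:v_i\in e\}$. Initialize every $\omega(e)$ to $2$ and every $\omega'(v)$ to $1$. We process $v_1,\ldots,v_n$ in order, and at step $i$ we finalize $\omega'(v_i)\in\{1,2\}$ together with $\omega(e)\in\{1,2,3\}$ for each edge $e$ with $\ell(e)=i$. The invariant to preserve after step $i$ is twofold: every edge $e$ with $\ell(e)\le i$ is non-monochromatic, and every processed vertex $v_j$ retains the option of swapping its $\omega'(v_j)$ inside $\{1,2\}$ in order to absorb a future one-unit shift of an incident edge weight.

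At step $i$ the task is to choose $\omega'(v_i)$ together with the $\omega(e)$'s for the edges $e$ with $\ell(e)=i$ so that each such $e$ is non-monochromatic. For any such $e$, either $\{w(v_j):v_j\in e,\,j<i\}$ already contains two distinct values (in which case $e$ is safe no matter what we do), or it is a singleton $\{c_e\}$ (in which case we must secure $w(v_i)\ne c_e$). Writing $k_i$ for the number of edges $e$ with $\ell(e)=i$, the forbidden set for $w(v_i)$ has size at most $k_i$, while the $2\cdot 3^{k_i}$ admissible joint choices of $\omega'(v_i)$ and the $k_i$ edge-weights produce at least $2k_i+2$ distinct achievable values of $w(v_i)$; so a valid choice always exists, provided the earlier commitments remain intact.

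The main obstacle is exactly preserving those earlier commitments: modifying $\omega(e)$ at step $\ell(e)$ shifts $w(v_j)$ by the same amount for every $v_j\in e$ with $j<i$, and those shifts must not transform a previously non-monochromatic edge into a monochromatic one. This is the place where the Kalkowski-style slack invariant does its work: each processed $v_j$ can flip $\omega'(v_j)$ between $1$ and $2$ to undo a one-unit shift, and because the slacks are independent across vertices, the parallel shifts inflicted on all of $e$'s previously-processed vertices by a single choice of $\omega(e)$ can be absorbed simultaneously. The only residual point is a degree-counting argument that bounds how many later edges can demand absorption at a single vertex $v_j$; this is handled by the same counting used in the proof of Theorem~\ref{Thm:gen}, and combined with the setup above it closes the induction and yields the desired weighting.
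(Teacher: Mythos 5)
The paper does not prove this theorem directly at all: it observes that the statement is an immediate corollary of the list version (Theorem~\ref{main}), obtained by assigning every vertex the list $\{1,2\}$ and every edge the list $\{1,2,3\}$; the real work happens in the Combinatorial Nullstellensatz/permanent argument there. Your proposal instead attempts a direct Kalkowski-style greedy algorithm, which is a legitimately different route -- but as written it has a genuine gap, and the gap sits exactly at the point you defer as a ``residual point.''

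The problem is the absorption mechanism. First, a single toggle of $\omm'(v_j)$ between $1$ and $2$ can compensate at most one unit of shift, in one direction, over the entire run of the algorithm; but $v_j$ may lie in arbitrarily many edges $e$ with $\ell(e)>j$ (consider a hypergraph in which every edge contains $v_1$), each of which may demand absorption when it is finalized. No ``degree-counting argument'' bounds this, and Theorem~\ref{Thm:gen} is quoted without proof in this paper, so nothing can be borrowed from it. Second, and specific to hypergraphs, the simultaneity claim fails: to absorb a $+1$ change of $\omm(e)$, \emph{every} previously processed $v_j\in e$ must currently have $\omm'(v_j)=2$, while a $-1$ change requires them all to be at $1$. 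In general the processed vertices of $e$ are split between the two states, so neither deviation from weight $2$ is absorbable and $\omm(e)$ is frozen at $2$. If several edges ending at $v_i$ are frozen this way, $w(v_i)$ has only the two values offered by $\omm'(v_i)$ available against up to $k_i$ forbidden values, and the step fails for $k_i\ge 2$. (In graphs each backward edge has a single processed endpoint, which is why Kalkowski's argument does not meet this obstruction; there the correct invariant is also not ``undo the shift'' but ``keep $w(v_j)$ inside a promised doubleton,'' allowing unboundedly many alternating toggles.) To repair your approach you would need that doubleton-promise machinery adapted to hyperedges, which is nontrivial precisely because one edge weight shifts all of its processed vertices in parallel; the paper avoids all of this by deducing the theorem from the algebraic list version.
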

We will not present the proof here, as the statement is implied by the list version, our main result of this note, which generalizes Theorem~\ref{tot123list}.
\begin{theorem}\label{main}
For every hypergraph $H$ without edges of size $0$ and $1$, and for every assignment of lists of size $2$ to the vertices and of size $3$ to the edges of $G$, there exists a weighting $\omm: V(G)\cup E(G)\to \R$ from the lists,
such that the induced total vertex weights $w(v):=\omm(v)+\sum_{e\ni v}\omm(e)$ properly color $V(G)$.
\end{theorem}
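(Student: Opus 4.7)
The plan is to extend the Kalkowski-style iterative construction that Zhu and Wong used for the graph case (Theorem \ref{tot123list}) to the hypergraph setting. The central observation is that to force a hyperedge $e$ to be non-monochromatic, it suffices to ensure $w(u_e) \ne w(v_e)$ for some distinguished pair $u_e, v_e \in e$. Accordingly, I fix a linear ordering $v_1 < v_2 < \cdots < v_n$ of $V(H)$ and, for every hyperedge $e$, set $v_e := \max(e)$ and $u_e := \max(e \setminus \{v_e\})$. The problem then reduces to producing a weighting with $w(u_e) \ne w(v_e)$ for every $e$.

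I process the vertices in decreasing order of index. At step $v_i$, I fix $\omm(v_i) \in L(v_i)$ together with $\omm(e) \in L(e)$ for every hyperedge $e$ with $\max(e) = v_i$. Two properties make this scheme workable. First, whenever $\max(e) = v_i$, every other vertex of $e$ has index strictly below $i$ and is still unprocessed, so modifying $\omm(e)$ at step $v_i$ does not disturb any already-finalized weighted degree. Second, once step $v_i$ is complete, the value $w(v_i) = \omm(v_i) + \sum_{e \ni v_i}\omm(e)$ is fully determined, since every $e$ containing $v_i$ satisfies $\max(e) \ge v_i$ and was handled at step $\max(e) \ge i$. The requirement at step $v_i$ is therefore that $w(v_i) \ne w(v_e)$ for each hyperedge $e$ with $u_e = v_i$, i.e., each $e$ in which $v_i$ is the second-largest vertex; the target $w(v_e)$ is already fixed, since $v_e > v_i$ was processed earlier.

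The main obstacle, exactly as in the graph case, is to verify that the flexibility available at step $v_i$ --- two choices for $\omm(v_i)$ and three choices for each of the $k_i := |\{e : \max(e) = v_i\}|$ currently-assigned hyperedges --- always yields enough distinct achievable values of $w(v_i)$ to avoid the at most $|\{e : u_e = v_i\}|$ forbidden targets. This is handled by the same careful shift-sum analysis underlying Zhu-Wong's proof, combined with a suitable choice of vertex ordering that keeps these two quantities balanced at every step. The only genuinely new ingredient compared to the graph case is that changing $\omm(e)$ simultaneously shifts the weighted degrees of all other vertices of $e$; but since those vertices are still unprocessed at step $v_i$, the shift is absorbed by their own future flexibility rather than creating any additional constraint. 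With this observation, Zhu-Wong's argument transfers essentially verbatim, yielding the theorem.
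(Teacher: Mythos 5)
There is a genuine gap, and it sits exactly at the step you wave away. First, a misattribution that matters: Zhu and Wong's proof of Theorem~\ref{tot123list} is \emph{not} a Kalkowski-style iterative construction; it is an algebraic argument via Alon's Combinatorial Nullstellensatz, showing that a certain maximum-degree coefficient of $\prod_e\bigl(w(u_e)-w(v_e)\bigr)$ --- computed as the permanent of a matrix of incidence columns --- is nonzero. Kalkowski's greedy algorithm proves only the non-list version (Theorem~\ref{tot123}), it relies on the concrete lists $\{1,2\}$ and $\{1,2,3\}$, and even there it must keep \emph{two} candidate values for each already-processed vertex and retroactively adjust edge weights at earlier vertices; your scheme finalizes every weight the moment it is assigned, which removes exactly the slack that makes Kalkowski's argument work. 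So there is no ``shift-sum analysis underlying Zhu--Wong's proof'' for your plan to inherit.

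Second, the counting in your one-pass greedy fails outright. At step $v_i$ you have the two values of $L(v_i)$ plus three values for each of the $k_i$ edges whose maximum is $v_i$; for arbitrary real lists the sumset can have as few as $2+2k_i$ distinct values, while the number of forbidden targets is the number of edges in which $v_i$ is the second-largest vertex, which is unrelated to $k_i$. Concretely, for the last-processed vertex $v_1$ (the global minimum) one always has $k_1=0$, so only two achievable values of $w(v_1)$, yet $v_1$ is $u_e$ for every $2$-element edge containing it --- in $K_n$ that is $n-1$ forbidden targets, and by symmetry no reordering helps. You would need to argue that the earlier choices can always be steered so that at most one of the two achievable values is ever forbidden, and nothing in the proposal does this. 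The paper's actual proof avoids the issue entirely: it sets up the matrix $A_H$ whose rows are indexed by edges, with entries $\pm1$ recording incidences with $u_e$ and $v_e$, and shows by induction on $n$ --- peeling off one vertex, inserting $k$ copies of its column, and trading columns using multilinearity of the permanent --- that some $m\times m$ selection with each vertex column used at most once and each edge column at most twice has nonzero permanent; the Nullstellensatz then yields the $(2,3)$-list statement. If you want a correct proof along your lines you would have to reconstruct that algebraic machinery, not a greedy pass.
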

\begin{proof}
The proof is an adaptation of the proof of Theorem~\ref{tot123list} to hypergraphs.
Order the vertices in some order.
For every edge $e\in E(H)$, let $u_e,v_e\in e$ be the first two vertices in the edge. We will show the stronger statement, that we can find a weighting such that $w(u_e)\ne w(v_e)$ for every edge $e\in E(H)$.
We may assume that $\{u_e,v_e\}\ne \{u_{e'},v_{e'}\}$ for any pair of edges $e,e'\in E(H)$. Otherwise, pick a weight from the list of $e'$, and add it to the lists of all vertices in $e'$. If we can now find a coloring weighting in the Hypergraph $H-e'$ from the new lists, the corresponding weighting in $H$ will also induce a proper coloring.

Now consider the polynomial
\[
\phi(\om)=\prod_{e\in E(H)}(w(u_e)-w(v_e)).
\]
Note that non-zeros of $\phi$ correspond exactly to total weightings of $H$ with $w(u_e)\ne w(v_e)$ for every edge $e\in E(H)$. Let $n=|V(H)|$, $m=|E(H)|$, and let $A_H$ be an $m\times (m+n)$ matrix, where the rows are labeled with the elements of $E(H)$, the columns are labeled with the elements of $E(H)\cup V(H)$, and
\[
A_H(e,z)=
\begin{cases}
~1,&\mbox{ if }z\in V(H)\mbox{ and }z\in e,\\
~1, &\mbox{ if }z\in E(H)\mbox{ and }u_e\in z,v_e\notin z,\\
-1, &\mbox{ if }z\in E(H)\mbox{ and }v_e\in z,u_e\notin z,\\
~0, &\mbox{ otherwise}.
\end{cases}
\]
Let $B$ be an $m\times m$ matrix consisting of columns of $A_H$, such that every column corresponding to a vertex of $H$ appears at most once, and every column corresponding to an edge of $H$ appears at most twice. Then the permanent $\per(B)$ equals a coefficient of a maximum degree monomial in $\phi$. By a standard application of Alon's Combinatorial Nullstellensatz, there exists a non-zero of $\phi$ (and thus a proper coloring by total vertex weights from the lists) from any given list assignment with vertex lists of size two and edge lists of size three, if we can find such a matrix $B$ with $\per(B)\ne 0$.

We will find such a $B$ by induction on $n$. For $n=0$, the statement is trivial as by definition, the permanent of a $0\times 0$ matrix is $1$. For $n\ge 1$, let $u\in V(H)$ be the first vertex in the order, and consider the hypergraph $H'$ induced by $H$ on $V(H)\setminus\{u\}$, i.e., $H'$ contains all edges of $H$ which do not contain $u$. Let $k=\deg(u)$, then by induction there is an $(m-k)\times (m-k)$-matrix $B'$ with $\per(B')\ne 0$ consisting of columns of $A_{H'}$, at most two columns equal to $A_{H'}(.,e)$ for each edge, and at most one column equal to $A_{H'}(.,v)$ for each vertex.

Now build an $m\times m$ matrix $C$ from $B'$ by the use of the corresponding columns of $A_H$, and the addition of $k$ identical columns equal to $A_H(.,u)$. These added columns all have $k$ entries equal to $1$, and the remaining entries $0$, so $\per(C)=k!~\per(B')\ne 0$. Now, whenever $u\in e\in E(H)$, and $C$ contains $A_H(.,v_e)$, replace that column by the column $A_H(.,e)$. Note that $A_H(.,e)=A_H(.,u)-A_H(.,v_e)$. By the multilinearity of the permanent, the difference of permanents of $C$ and the matrix after the switch is equal to the permanent of a matrix containing $(k+1)$ copies of $A_H(.,u)$, a singular matrix with permanent $0$. Thus, we can make all these switches one-by-one, arriving at a matrix $D$ without columns equal to $A_H(.,v_e)$ and at most one column equal to $A_H(.,e)$ for $u\in e\in E(H)$, and with $\per(D)=\per(C)$.

To now get $B$ from $D$, we will replace the columns equal to $A_H(.,u)$ appropriately one-by-one. For every $e\ni u$, replacing one column $A_H(.,u)$ by one of $A_H(.,v_e)$ and $A_H(.,e)$ will result in a matrix with $\per\ne 0$. If both the resulting matrices had permanent $0$, then by the multilinearity of the permanent, the matrix before the replacement would have had permanent $0$, a contradiction. After replacing all columns equal to $A_H(.,u)$ this way, we arrive at $B$ with the desired properties.
\end{proof}

 \bibliographystyle{amsplain}

\end{document}